\definecolor{marin}{rgb}{0.,0.3,0.7}
\journalname{}
\date{ \phantom{b} \vspace{45mm}\phantom{e}}
\def\iu{{\rm i}}
\def\e{{\rm e}}
\def\Z{{\mathbb Z}}
\def\real{{\mathbb R}}
\def\eps{\varepsilon}
\def\bigo{{\mathcal O}}
\def\signed #1 {{\unskip\nobreak\hfil\penalty50
    \hskip2em\hbox{}\nobreak\hfil\small#1
    \parfillskip=0pt \finalhyphendemerits=0 \par}}
\newcommand{\sfrac}[2]{\mbox{\footnotesize$\displaystyle\frac{#1}{#2}$}}
\newcommand\calM{{\cal M}}
\newcommand\calN{{\cal N}}
\newcommand\calE{{\cal E}}
\newcommand\calK{{\cal K}}
\newcommand\calU{{\cal U}}
\newcommand\calR{{\cal R}}
\newcommand\bfk{{\mathbf k}}
\newcommand\bfp{{\mathbf p}}
\newcommand\bfq{{\mathbf q}}
\newcommand\bfv{{\mathbf v}}
\newcommand\bfw{{\mathbf w}}
\newcommand\bfy{{\mathbf y}}
\newcommand\bfz{{\mathbf z}}
\newcommand\bfLambda{{\boldsymbol \Lambda}}
\newcommand\bfvartheta{{\boldsymbol \vartheta}}
\newcommand\bfvarpi{{\boldsymbol \varpi}}
\newcommand\bfomega{{\boldsymbol \omega}}
\newcommand\bfzero{{\mathbf 0}}
\newcommand\jvec{{\langle j \rangle}}
\begin{document}

\title{{Energy separation in oscillatory Hamiltonian systems
without any non-resonance condition}}
\titlerunning{Energy separation in oscillatory Hamiltonian systems}

\author{Ludwig Gauckler\inst{1} \and Ernst Hairer\inst{2} \and Christian Lubich\inst{3}}
\institute{Institut f\"ur Mathematik, TU Berlin, Stra\ss e des 17.~Juni 136, D-10623 Berlin, Germany. \email{gauckler@math.tu-berlin.de}
\and
Section de math\'ematiques, 2-4 rue du Li\`evre,
Universit\'e de Gen\`eve, CH-1211
Gen\`eve 4,
Switzerland. \email{Ernst.Hairer@unige.ch} 
\and
  Mathematisches Institut, Universit\"at T\"ubingen, Auf der Morgenstelle,
  D-72076 T\"ubingen, Germany. \email{Lubich@na.uni-tuebingen.de}}
\authorrunning{L. Gauckler, E. Hairer and Ch. Lubich}

\date{}

\maketitle
\begin{abstract}
We consider multiscale Hamiltonian systems in which harmonic oscillators with several high frequencies are coupled to a slow system. It is shown that the oscillatory energy is nearly preserved over  long times $\eps^{-N}$ for arbitrary $N>1$, where $\eps^{-1}$ is the size of the smallest high frequency. The result is uniform in the frequencies and does not require non-resonance conditions.
\end{abstract}

\section{Introduction and statement of the main result}

As has been discussed, e.g., in 
\cite{bambusi93eso,benettin87roh,benettin89roh,carati01tod,shudo06oos},  the problem of slow rates of thermalization in statistical mechanics can be understood  by studying multiscale problems of the following type:
For momenta $\bfp = (p_{0},p_{1},\ldots ,p_{n})$ and positions
$\bfq = (q_{0},q_{1},\ldots ,q_{n})$ with $p_{j},q_{j}\in\real^{d_{j}}$
consider a  Hamiltonian that couples high-frequency harmonic oscillators with a Hamiltonian of slow motion,
\begin{equation}\label{hamil}
H(\bfp ,\bfq ) =  H_{\bfomega}(\bfp ,\bfq ) + H_{\rm slow} (\bfp ,\bfq ) ,
\end{equation}
where the oscillatory and slow-motion energies are given by
\begin{equation}\label{energies}
H_{\bfomega}(\bfp ,\bfq )  =  \sum_{j=1}^n \sfrac 12\Bigl( |p_{j}|^2 +
 \omega_{j}^2 \,|q_{j}|^2\Bigr), \qquad
H_{\rm slow}(\bfp ,\bfq ) = \sfrac 12  |p_{0}|^2 + U(\bfq )
\end{equation}
with high angular frequencies
\begin{equation}\label{frequencies}
\omega_{j} \ge \frac1{\eps}, \quad
0<\eps\ll 1 .
\end{equation}
The coupling potential $U(\bfq )$ is assumed smooth with derivatives bounded independently of the small parameter $\eps$.

Weakly anharmonic systems similar to \eqref{hamil}--\eqref{frequencies} have been studied in numerous publications, from different viewpoints and with different mathematical techniques, ever since the seminal paper by Fermi, Pasta \& Ulam (\cite{fermi55son}, reprinted in \cite{gallavotti08tfp}). For some recent work, see  \cite{aoki06eti,dolgopyat11eti,gallavotti08tfp,hairer12ote,huveneers12tco}  and references therein.
The system \eqref{hamil}--\eqref{frequencies} has been considered in \cite{benettin89roh} with the double motivation of explaining slow rates of thermalization and the realization of holonomic constraints.

The equations of motion are
\begin{equation}\label{ode}
\ddot q_j + \omega_j^2 q_j =
 -\nabla_j U(\bfq ) , \qquad j=0,\dots,n,
\end{equation}
where $\nabla_j$ denotes the partial derivative with respect to $q_j$ and $\omega_{0}=0$.
We are interested in the energy exchange (or lack thereof) between the fast and the slow system, where in contrast to previous work we assume no condition on the frequencies other than the lower 
bound~(\ref{frequencies}).


\begin{theorem}\label{thm:main}
Fix an energy bound $E>0$, an integer $N\ge 1$, and further a radius $\rho>0$ and a set 
${K\subset \real^{d_0}}$ such that the potential $U$ has bounded derivatives of all orders in a $\rho$-neigh\-bour\-hood of $K\times 0\times\dots\times 0$.
Then there exist $C>0$ and
$\eps^* >0$ such that the following holds for $0<\eps \le \eps^*$: Whenever the frequencies satisfy $\omega_j\ge \eps^{-1}$ for $j=1,\dots,n$ and the
 initial values $(\bfp (0) , \bfq (0))$ are such that
 \begin{equation}\label{bound-energy}
H_{\bfomega}\bigl( \bfp (0) , \bfq (0) \bigr) \le E,
\end{equation}
then, along the solution of (\ref{ode}) to these initial values, the oscillatory energy deviates from its starting value by no more than
\begin{equation}\label{main-est}
\big| H_{\bfomega}\bigl( \bfp (t) , \bfq (t) \bigr) - H_{\bfomega}\bigl( \bfp (0) , \bfq (0) \bigr)\big|
\le C\, \eps ^{3/4}
\quad\ \hbox{for}\quad 0\le t \le \eps^{-N}  ,
\end{equation}
provided that $q_0(t)$ stays in the set $K$ for such long times.
The threshold~$\eps^*$ and the constant $C$ 
depend on $n$ and $N$, on the energy bound~$E$ and on bounds of derivatives of the
potential $U$.
\end{theorem}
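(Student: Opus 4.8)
The plan is to construct a modulated Fourier expansion of the solution and to extract from it an almost-invariant which, unlike the individual oscillator actions, is insensitive to arbitrary resonances among the frequencies. One makes the ansatz
\[
 q_j(t)\;\approx\;\sum_{\bfk}\, z_j^{\bfk}(\eps t)\,\e^{\iu(\bfk\cdot\bfomega)t}\,,\qquad j=1,\dots,n\,,
\]
with $\bfomega=(\omega_1,\dots,\omega_n)$ and $\bfk\in\Z^n$ ranging over a finite set, with slowly varying modulation functions $z_j^{\bfk}$, coupled to the slow component $q_0$, which is kept via its own equation. Inserting this into \eqref{ode} and expanding $U$ in a Taylor series about the origin in the fast variables, the coefficient of $\e^{\iu(\bfk\cdot\bfomega)t}$ yields, for each pair $(j,\bfk)$, an equation whose principal part is $\bigl(\omega_j^2-(\bfk\cdot\bfomega)^2\bigr)\,z_j^{\bfk}$, together with $\eps$-derivatives of $z_j^{\bfk}$ and a polynomial nonlinearity in lower-order modulation functions. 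For the dominant indices $\bfk=\jvec$, carrying the oscillation $\e^{\pm\iu\omega_j t}$, this factor vanishes and one obtains a differential equation for $z_j^{\jvec}$; the same has to be done for every \emph{near-resonant} pair, i.e.\ whenever $\bigl|\omega_j^2-(\bfk\cdot\bfomega)^2\bigr|$ lies below a threshold fixed only at the end, while for all other pairs $z_j^{\bfk}$ is obtained algebraically by dividing through the then safely large factor. Because $\omega_j\ge\eps^{-1}$ and the multi-indices are bounded, only finitely many near-resonant pairs can occur, the recursion closes, and it produces modulation functions of decreasing size --- the dominant ones being $\bigo(\eps)$ by \eqref{bound-energy} --- together with a small defect when the ansatz is reinserted into \eqref{ode}.

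The decisive structural point is that the modulation system is itself Hamiltonian and carries the rotational symmetries $z_j^{\bfk}\mapsto\e^{\iu(\bfk\cdot\bfvartheta)}z_j^{\bfk}$ for every $\bfvartheta\in\real^n$ orthogonal to the resonance module $\{\bfk\in\Z^n:\bfk\cdot\bfomega=0\}$; by Noether's theorem each such direction yields a conserved quantity of the modulation system. The direction $\bfvartheta=\bfomega$ is \emph{always} admissible, since every resonance is by construction orthogonal to $\bfomega$, so the corresponding invariant exists no matter which resonances occur. This $\bfomega$-weighted combination of the modulation actions coincides, up to the size of the modulation defect, with the oscillatory energy $H_{\bfomega}$ evaluated along the modulated solution. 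Here is the reason why no non-resonance condition is needed: one does not attempt to conserve the single actions, which genuine resonances do destroy, but only their $\bfomega$-weighted sum, and this sum is resonance-proof.

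The argument is then completed by the standard long-time bootstrap. One shows that the true solution stays $\bigo(\eps^{M})$-close, for $M$ large, to the modulated solution on every interval of length $\bigo(\eps^{-1})$, that such intervals may be concatenated --- re-expanding at each step --- as long as $q_0$ remains in $K$ and the oscillatory energy stays below $E+1$ (both of which are propagated by the estimate itself), and hence that over $[0,\eps^{-N}]$ the almost-invariant changes by at most $\eps^{-N}$ times the defect incurred per unit of time, plus the accumulated mismatches at the re-expansions. The size of the retained index set and the near-resonance threshold are two free parameters; optimizing them against each other and against $N$ makes the whole contribution $\bigo(\eps^{3/4})$, which is precisely why the exponent in \eqref{main-est} is $3/4$ rather than an arbitrarily large power of~$\eps$.

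The step I expect to be the main obstacle is the systematic handling of the divisors $\omega_j^2-(\bfk\cdot\bfomega)^2$, which here --- in the absence of any non-resonance hypothesis --- may be arbitrarily small or exactly zero. One must partition all multi-indices into near-resonant ones, which are kept inside the modulation system and governed by differential equations (where the symmetry argument above guarantees that they cannot harm the $\bfomega$-weighted invariant), and non-resonant ones, which are eliminated by division; this partition has to be carried out consistently through all steps of the construction while keeping the index set finite, and the threshold then has to be balanced against the order of the expansion. A second, more technical, difficulty is that all estimates must be uniform in the frequencies, so every bound has to be expressed through $\eps$ alone and never through the individual $\omega_j$.
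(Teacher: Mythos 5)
Your outline (modulated Fourier expansion, a Noether-type almost-invariant close to $H_{\bfomega}$, concatenation of intervals) matches the paper's strategy in broad terms, but the step you yourself flag as the main obstacle --- the handling of the divisors $\omega_j^2-(\bfk\cdot\bfomega)^2$ in the absence of any non-resonance hypothesis --- is exactly where your argument has a genuine gap, and the two ideas the paper uses to close it are missing. First, a single near-resonance threshold ``fixed only at the end'' does not work: a divisor can sit just barely above whatever threshold you choose, and then the iterative construction of the modulation functions gains essentially nothing per step (the gain per iteration is the ratio of the neglected derivative terms to the divisor), so you cannot drive the defect down to $\bigo(\eps^{N+1})$ with a bounded number of iterations. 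The paper instead proves a gap lemma (Lemma 1): pigeonholing the logarithms of the finitely many values $|\bfk\cdot\bfomega|$ with $\|\bfk\|\le N+1$ yields $\mu>0$ and $\alpha\in[\mu,\frac14]$, depending only on $n$ and $N$, such that no $|\bfk\cdot\bfomega|$ lies in $[\eps^{-\alpha},\eps^{-\alpha-\mu}]$; the cut is placed in this gap, every divisor that is divided out is then at least of size $\eps^{-\alpha-\mu}$ while all retained combinations are at most $\eps^{-\alpha}$, and each iteration gains a factor $\eps^{\mu}$. This also dictates the time scales: the modulation functions must depend on $\tau=\eps^{-\alpha}t$ (not $\eps t$, as in your ansatz), because retained almost-resonant combinations can be as large as $\eps^{-\alpha}$, and consequently the expansion is valid only on short intervals of length $\eps^{\alpha}$, not $\eps^{-1}$.

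Second, your Noether argument is resonance-proof only against \emph{exact} resonances, whereas the whole point of the theorem is the absence of conditions excluding \emph{almost}-resonances. If you keep multi-indices with $\bfk\cdot\bfomega$ small but nonzero inside the modulation system, the modulation potential is not invariant under the group action you invoke (the phases $\e^{\iu(\bfk^1+\dots+\bfk^m)\cdot\bfomega t}$ do not cancel exactly), so your $\bfomega$-weighted quantity drifts at a rate proportional to these small nonzero combinations, which over times $\eps^{-N}$ is far too large. The paper's remedy is to modify the frequencies, $\varpi_j=\omega_j+\vartheta_j$ with $\|\bfvartheta\|\le\gamma\eps^{-\alpha}$, so that all almost-resonances become exact resonances ($\bfk\cdot\bfvarpi=0$ on the module $\calM$) while non-resonant combinations remain of size at least $\frac12\eps^{-\alpha-\mu}$; the expansion is built with phases $\e^{\iu(\bfk\cdot\bfvarpi)t}$, the modulation potential is then \emph{exactly} invariant under the one-parameter group generated by $\bfvarpi$, and the almost-invariant moves only by the $\bigo(\eps^{N+1})$ defect per interval. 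Finally, the exponent $3/4$ does not come from optimizing the index set against the threshold, as you suggest: it is $\eps^{1-\alpha}$ with $\alpha\le\frac14$, arising from the $\bigo(\eps^{1-\alpha})$ discrepancy between the Noether invariant $\calE$ and $H_{\bfomega}$ caused by the $\eps^{-\alpha}$-fast variation of the modulation functions.
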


We emphasize that no non-resonance condition is required in this result. The estimate is uniform in the frequencies $\omega_j\ge \eps^{-1}$. This sets Theorem~\ref{thm:main} apart from existing related results in Hamiltonian perturbation theory.

For   fixed strongly non-resonant frequencies as well as for exactly resonant frequencies that satisfy a diophantine non-resonance condition outside a resonance module, the long-time estimate (\ref{main-est}) is a  particular case of the energy separation results shown by Benettin, Galgani and Giorgilli  \cite{benettin89roh}. The proof of Theorem~\ref{thm:main} is based on modulated Fourier expansions with suitably modified frequencies and does not use the canonical transformation techniques of Hamiltonian perturbation theory on which the proofs in \cite{benettin89roh} are based. 
The idea of modifying frequencies to avoid almost-resonant situations has also been used in \cite{bambusi93eso} in a different context where all frequencies are sufficiently close to a single high frequency.

The results in \cite{benettin89roh} are proved over times that are exponentially long in negative fractional powers of $\eps$ in the case of a real-analytic potential. We expect that then also Theorem~\ref{thm:main} holds over exponentially long times, but we will not investigate this question in this paper. See, however, \cite{cohen03mfe} for a corresponding result over exponentially long times for a single high frequency proved via modulated Fourier expansions.

We remark that the order $\eps^{3/4}$ can be improved, with the same proof,  to $\eps^{1-\delta}$ for arbitrary $\delta>0$, with $\eps^*$ and $C$ depending additionally on $\delta$ and deteriorating as $\delta\to 0$.

The energy bound $H_\bfomega(\bfp,\bfq)\le {\rm Const.}$ implies $q_j=\bigo(\eps)$ for $j=1,\dots,n$. Theorem~\ref{thm:main} and conservation of the total energy therefore yield that also the energy $\frac12|p_0|^2+U(q_0,0,\dots,0)$ of the isolated slow system is nearly preserved over time $\eps^{-N}$ along solutions of (\ref{ode}).

We present a numerical example in the next section and prove Theorem~\ref{thm:main} in the remainder of the paper.

\section{Numerical illustration}
 
We consider the problem with $n=7$ frequencies
\[
\eps \omega_j = 1, 1+\eps^2, 1+\eps, 1+\eps^{3/4},1+\eps^{2/3}, 1+\eps^{1/2}, 2
\]
for $j=1,\ldots ,n$, and with the potential
\[
U(\bfq ) = \sfrac 12 \,q_{0}^2 + \bigl(a\, q_{0}+q_{1}+q_{2}+2\,q_{3}+
3\,q_{4}+q_{5}+q_{6}+3\,q_{7}\bigr)^3 
\]
depending on a parameter $a$.
The initial values are chosen as
\[
\begin{array}{rcl}
\bfq (0) &=& (1, 0.3\, \eps , 0.4\, \eps , 0.7\, \eps , -1.1\, \eps ,0.4\, \eps ,
-0.6\, \eps , -0.7\, \eps )\\[2mm]
\bfp (0) &=& ( -0.2 , 0.6 , 0.7 , -0.9 , -0.9 , 0.4 , -1.1 , 0.8 ).
\end{array}
\]
The numerical solution is computed with a trigonometric integrator (symplectic method of Deuflhard, see \cite[Chap.\,XIII]{hairer06gni}), which is applied with constant step size $\Delta t = 0.01\, \eps $. Taking the step size twice as large gives nearly identical figures.

\begin{figure}[h]
\centering
\global\def\path{#1}\input{desgaut1.inp}
\vspace{-4mm}
\caption{Individual oscillatory energies and shifted total oscillatory energy
with parameter $a=0.5$ and $\eps =0.005$.}
\label{fig:desgaut1}
\end{figure}

We consider two different values for the parameter $a$.
Figure~\ref{fig:desgaut1} shows the individual oscillatory energies
$E_{j}= \frac 12 (|p_{j}|^2 + \omega_{j}^2 |q_{j}|^2)$ as well as
the total oscillatory energy $H_{\bfomega}$ on an interval of
length $100\, 000$ for $a=0.5$ (for $a=1$ the energies explode)
and for $\eps=0.005$.
The initial oscillatory
energies are close to $0.22, 0.32, 0.65, 1.05, 0.17, 0.82, 1.3$. We observe that the oscillatory energies for the modes $j=4,5,6$ remain nearly constant, whereas for the other modes, which are closer to a 1:1 and 1:2 resonance, there is a significant energy exchange.  
In the figure
the constant value $2.3$ is subtracted from the total oscillatory energy, so that
it fits better into the picture. 
We repeated the experiment with other values of $\eps$ and found that
the maximal deviation (\ref{main-est})
of the total oscillatory
energy is
$4.56\cdot 10^{-1}$ for $\eps = 0.02$, 
$1.85\cdot 10^{-1}$ for $\eps = 0.01$, and
$9.54\cdot 10^{-2}$ for $\eps = 0.005$, which indicates an
$\bigo (\eps )$-behaviour.

\begin{figure}[h]
\centering
\global\def\path{#1}\input{desgaut2.inp}
\vspace{-4mm}
\caption{Individual oscillatory energies and total oscillatory energy
with parameter $a=\eps$.
The upper picture is for $\eps =0.02$,
the middle picture for $\eps =0.01$, and the lower picture for $\eps =0.005$.}
\label{fig:desgaut2}
\end{figure}

Figure~\ref{fig:desgaut2} shows the same experiment for the parameter $a=\eps$
and for three different values of $\eps$.
Here, the maximal deviation (\ref{main-est}) of the total oscillatory
energy is
$3.95\cdot 10^{-2}$ for $\eps = 0.02$, 
$1.41\cdot 10^{-2}$ for $\eps = 0.01$, and
$4.81\cdot 10^{-3}$ for $\eps = 0.005$.

\section{Modulated Fourier expansion with modified frequencies}

\subsection{Changing almost-resonant frequencies to exactly resonant frequencies} 

We deal with almost-resonant frequencies by introducing modified frequencies close to the original frequencies such that almost-resonances among the original frequencies become exact resonances among the modified frequencies. 
For a multi-index $\bfk =(k_{1},\ldots ,k_{n})\in \Z^n$, we denote
$\| \bfk \| = |k_{1}|+ \ldots + |k_{n}|$. The
vector of frequencies is $\bfomega = (\omega_{1},\ldots , \omega_{n})$, and we write
$\bfk\cdot\bfomega = \sum_{j=1}^n k_{j}\omega_{j}$. 

In order to decide whether a linear combination $\bfk\cdot\bfomega$ of frequencies is considered as almost-resonant or as non-resonant we use the fact that there is a \emph{gap} in the linear combinations of frequencies.
\begin{lemma}\label{lemma:gap}
There exist $\mu>0$ depending only on $n$ and $N$ and $\alpha\in[\mu,\frac14]$ depending on the frequencies such that there is no multi-index $\bfk\in \Z^n$ with $\|\bfk\| \le N+1$ and $\eps^{-\alpha} \le |\bfk\cdot\bfomega|\le \eps^{-\alpha-\mu}$.
\end{lemma}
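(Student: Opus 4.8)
The plan is an elementary pigeonhole/measure argument, carried out after passing to a logarithmic (``exponent'') scale in the quantity $|\bfk\cdot\bfomega|$. First I would record that the number
\[
L=L(n,N):=\#\{\bfk\in\Z^n:\|\bfk\|\le N+1\}
\]
is finite and depends only on $n$ and $N$, and then fix, once and for all,
\[
\mu:=\frac{1}{8(L+1)},
\]
so that $\mu$ depends only on $n$ and $N$ and satisfies both $\mu<\tfrac14$ and $(L+1)\mu<\tfrac14$.

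Next I would translate the ``forbidden band'' $\eps^{-\alpha}\le|\bfk\cdot\bfomega|\le\eps^{-\alpha-\mu}$ into a condition on the single real parameter $\alpha$. A multi-index with $\bfk\cdot\bfomega=0$ never satisfies $\eps^{-\alpha}\le|\bfk\cdot\bfomega|$, so it plays no role and can be discarded immediately. For the remaining $\bfk$ (those with $\|\bfk\|\le N+1$ and $\bfk\cdot\bfomega\ne0$, of which there are at most $L$) set $\beta_\bfk:=\ln|\bfk\cdot\bfomega|/\ln(1/\eps)$, i.e.\ $|\bfk\cdot\bfomega|=\eps^{-\beta_\bfk}$. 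Since $0<\eps<1$, the map $x\mapsto\eps^{-x}$ is increasing, so the band is equivalent to $\alpha\le\beta_\bfk\le\alpha+\mu$, i.e.\ to $\alpha\in[\beta_\bfk-\mu,\beta_\bfk]$. Hence the set of \emph{bad} values, meaning those $\alpha\in[\mu,\tfrac14]$ for which some admissible $\bfk$ does fall in the band, is contained in a union of at most $L$ intervals each of length $\mu$, and so has Lebesgue measure at most $L\mu$.

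Finally I would conclude by comparing lengths: the interval $[\mu,\tfrac14]$ has length $\tfrac14-\mu$, and the choice of $\mu$ gives $L\mu<\tfrac14-\mu$, so the bad set cannot cover $[\mu,\tfrac14]$; any $\alpha$ in the (nonempty) complement has the required property, and it depends on $\bfomega$ and $\eps$ only through the finitely many numbers $\beta_\bfk$, as claimed. I do not expect a genuine obstacle here; the statement is soft. The only points that need a little care are (i) discarding the exact resonances $\bfk\cdot\bfomega=0$ before taking logarithms, and (ii) the bookkeeping that keeps $\mu$ dependent solely on $L$, hence only on $n$ and $N$, while permitting $\alpha$ to depend on the data. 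If a leaner count of the relevant indices were desired, one could further restrict attention to $\bfk$ with $1\le|\bfk\cdot\bfomega|\le\eps^{-1/2}$, but this refinement is not needed for the argument.
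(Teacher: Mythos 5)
Your proof is correct and follows essentially the same route as the paper: pass to logarithms to base $\eps^{-1}$ of the finitely many values $|\bfk\cdot\bfomega|$, choose $\mu$ inversely proportional to their number, and find a width-$\mu$ gap inside $[\mu,\tfrac14]$. The only cosmetic differences are that you phrase the gap-finding step as a Lebesgue-measure comparison rather than the paper's pigeonhole on subintervals, and you explicitly discard the indices with $\bfk\cdot\bfomega=0$, a detail the paper leaves implicit.
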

\begin{proof}
Let us denote the number of all multi-indices $\bfk\in\Z^n$ with $\| \bfk \| \le N+1$ by $M$, which obviously depends only on $n$ and $N$. Consider the logarithms to base $\eps^{-1}$ of the $|\bfk\cdot\bfomega|$ with $\| \bfk \| \le N+1$, which gives a set $\{
a_1,...,a_M \}$. With $\mu=1/(4M+4)$, there is an interval of width $\mu$ in $[\mu,\frac14]$ that does not contain any of the $a_i$. If we denote this interval by $[\alpha,\alpha+\mu]$, then there is no $\bfk\in \Z^n$ with $\|\bfk\| \le N+1$ and $\eps^{-\alpha} \le |\bfk\cdot\bfomega|\le \eps^{-\alpha-\mu}$. \qed
\end{proof}
Let $\alpha$ and $\mu$ be as in the previous lemma. We consider multi-indices $\bfk\in \Z^n$ with $\|\bfk\| \le N+1$ and $|\bfk\cdot\bfomega|\ge \eps^{-\alpha-\mu}$ as non-resonant, whereas those with $|\bfk\cdot\bfomega|\le\eps^{-\alpha}$ are considered as almost-resonant and collected in the set 
$$
\calR = \{ \bfk\in \Z^n\,:\, |\bfk\cdot\bfomega|\le\eps^{-\alpha} , \|\bfk\| \le N+1 \}.
$$ 
This set generates the module
$\calM$ that contains all finite linear combinations of multi-indices in $\calR$ with integer coefficients. 
We select a maximum number $d\le n$ of $\real$-linearly independent vectors $\bfk^1,\ldots,\bfk^d\in\calR$. Their real-linear span contains the module $\calM$.
%

We modify the almost-resonant frequencies $\omega_j$ to exactly resonant frequencies
$$
\varpi_j = \omega_j + \vartheta_j, \qquad j=1,\ldots,n,
$$
by determining $\bfvartheta=(\vartheta_1,\ldots,\vartheta_n)$ as a solution of minimal norm of the (possibly underdetermined) system of linear equations
$$
\bfk^i \cdot \bfomega + \bfk^i\cdot\bfvartheta = 0, \qquad i=1,\ldots,d.
$$
We then have
\begin{equation}\label{gamma}
\| \bfvartheta \| \le \gamma  \eps^{-\alpha},
\end{equation}
where $\gamma $ depends only on $n$ and $N$, and
\begin{equation}\label{inM}
\bfk \cdot \bfvarpi = 0 \quad\hbox{ for all } \bfk\in \calM,
\end{equation}
i.e., almost-resonances in the original frequencies become exact resonances in the modified frequencies. 
We further have the lower bound
\begin{equation}\label{notinM}
| \bfk\cdot\bfvarpi | \ge \tfrac12\, \eps^{-\alpha-\mu}
\quad\hbox{ for }\, \bfk\notin \calM, \,\|\bfk\|\le N+1,
\end{equation}
provided that $0<\eps\le \eps^*$, where $\eps^*$ depends only on $n$ and $N$, i.e., non-resonant multi-indices for the original frequencies are still non-resonant for the modified frequencies. It will be convenient to let $\varpi_0=0$ and $\vartheta_0=0$.

\subsection{Short-time approximation by a modulated Fourier expansion}
We let $\calN$ be a set of representatives of the equivalence classes in $\Z^n/\calM$, which are chosen such that for each $\bfk\in\calN$, the norm of $\bfk$ is minimal in the equivalence class $[\bfk]=\bfk+\calM$, and with $\bfk\in\calN$, also $-\bfk\in\calN$. We denote
$\calK = {\{ \bfk \in \calN \,:\, \| \bfk\| \le N \}}$.

We make the approximation ansatz (\emph{modulated Fourier expansion}) 
\begin{equation}\label{mfe}
q_{j}(t) \approx \sum_{\bfk\in\calK}
z_{j}^\bfk (\eps^{-\alpha}t)\, \e^{\iu (\bfk\cdot\bfvarpi )t},
\qquad j=0,1,\ldots ,n.
\end{equation}
The modulation functions $z_j^\bfk(\tau)$ for $\tau=\eps^{-\alpha}t$ with $\alpha$ from Lemma~\ref{lemma:gap} are  determined such that all their derivatives with respect to $\tau$ are bounded independently of $\eps$ and that they yield a small defect $\delta_j^\bfk$ in the modulation equations,
which are derived by inserting the expansion (\ref{mfe}) into the differential equation (\ref{ode}) and comparing the coefficients of~$\e^{\iu (\bfk\cdot\bfvarpi )t}$:
\begin{eqnarray}
&&\hspace{-2mm}\bigl(\varpi_{j}^2 - (\bfk\cdot\bfvarpi )^2\bigr)\, z_{j}^\bfk
 + 2\,\iu (\bfk\cdot\bfvarpi )\, \eps^{-\alpha}\frac{dz_{j}^\bfk}{d\tau} 
+ \eps^{-2\alpha} \frac{d^2z_{j}^\bfk}{d\tau^2} 
\label{mod-eq} 
\\
&& \hspace{4cm}
 = (2\varpi_j\vartheta_j - \vartheta_j^2)z_j^\bfk - \nabla_{j}^{-\bfk}\, \calU (\bfz )
 +\delta_j^\bfk.
\nonumber
\end{eqnarray}
Here $ \nabla_{j}^{-\bfk}$ denotes the gradient with
respect to $z_{j}^{-\bfk}$ and the modulation potential $\calU(\bfz)$ for $\bfz=(z_j^\bfk)$ is given as
\begin{eqnarray*}
&&
\calU (\bfz ) = 
U(\bfz^\bfzero_0 ) 
+ \sum_{m=1}^N
\sum_{j_{1},\ldots ,j_{m}=0}^n
\sum_{\bfk^1+\ldots +\bfk^m \in\calM}\frac 1 {m!}\, \partial_{j_1}\ldots\partial_{j_m}U(\bfz^\bfzero_0 )
\bigl( z_{j_{1}}^{\bfk^1},\ldots ,z_{j_{m}}^{\bfk^m} \bigr),
\end{eqnarray*}
where $\bfz^\bfzero_0 = (z_{0}^\bfzero , 0,\ldots ,0)$ and the indices
$(j_{l},\bfk^l ) = (0,\bfzero )$ are excluded from the sum. Note that $z_j^\bfk$ are vectors in $\real^{d_j}$ and correspondingly $\partial_{j_1}\ldots\partial_{j_m}U(\bfz^\bfzero_0 )$ is a multilinear form on $\real^{d_{j_1}}\times\dots\times\real^{d_{j_m}}$; in contrast $\nabla_j U$ denotes the gradient, a column vector in $\real^{d_j}$. In the following we
let $\jvec=(0,\dots,1,\dots,0)$ denote the $j$th unit vector in~$\Z^n$.

\begin{proposition}\label{thm:mfe}
In the situation of Theorem~\ref{thm:main}, the solution $q_j(t)$ $(j=0,\dots,n)$ of (\ref{ode}) admits an expansion
\begin{equation}\label{mfe2}
q_{j}(t)= \sum_{\bfk\in\calK}
z_{j}^\bfk (\eps^{-\alpha}t)\, \e^{\iu (\bfk\cdot\bfvarpi )t} + r_j(t),
\qquad 0\le t \le \eps^\alpha,
\end{equation}
where the coefficient functions $z_j^\bfk(\tau)$ satisfy $z_j^{-\bfk}=\overline{z_j^\bfk}$ and are bounded for $0\le\tau\le 1$, together with all their derivatives with respect to $\tau$ up to any fixed order,  by
$$
|z_0^\bfzero (\tau)| \le C_1, \quad |z_j^{\pm\jvec}(\tau)| \le C_1\omega_j^{-1}\le C_1\eps,
$$
and for all other $(j,\bfk)$,
\[
 |z_j^\bfk(\tau)| \le C_1 |\varpi_j^2 - (\bfk\cdot\bfvarpi)^2|^{-1} \eps^{\|\bfk\|}.
\]
The functions $z_j^\bfk$ satisfy the modulation equations (\ref{mod-eq}) with a defect bounded by
\begin{equation}\label{eq:defect}
|\delta_j^\bfk(\tau)| \le C_2 \eps^{N+1}, \qquad 0\le\tau\le 1.
\end{equation}
The remainder term is bounded by
$$
\bigl(\omega_j^2 |r_j(t)|^2 + |\dot r_j(t)|^2\bigr)^{1/2} \le C_3\eps^{N+1}, \qquad 0\le t \le \eps^\alpha.
$$
The constants $C_1,C_2,C_3$ are independent of $\eps$ and the frequencies
$\omega_{j}\ge \eps^{-1}$, but depend on $n$ and $N$, on the energy bound $E$ and on bounds of derivatives of the
potential~$U$.
\end{proposition}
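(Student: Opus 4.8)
The plan is to build the modulation functions $z_j^\bfk$ ($\bfk\in\calK$) as smooth functions of the rescaled time $\tau=\eps^{-\alpha}t\in[0,1]$ by a short iterative construction and then to control the remainder $r_j$ on $0\le t\le\eps^\alpha$ by a Gronwall argument. Everything rests on the fact that the frequency modification has removed all small divisors: for $j\in\{0,\dots,n\}$ and $\bfk\in\calK$ the factor $\varpi_j^2-(\bfk\cdot\bfvarpi)^2$ in (\ref{mod-eq}) equals $\bigl((\jvec-\bfk)\cdot\bfvarpi\bigr)\bigl((\jvec+\bfk)\cdot\bfvarpi\bigr)$ for $j\ge 1$ (and $-(\bfk\cdot\bfvarpi)^2$ for $j=0$), so by (\ref{inM})--(\ref{notinM}) it vanishes exactly when $\bfk$ is the $\calN$-representative of $[\jvec]$ or of $[-\jvec]$ (for $j=0$ only when $\bfk=\bfzero$), and is otherwise bounded below by $\tfrac14\eps^{-2\alpha-2\mu}\ge\tfrac14$, uniformly in $\eps$, in the frequencies, and in the admissible $\alpha\in[\mu,\tfrac14]$. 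Call the first kind of index \emph{resonant}, the second \emph{non-resonant}. (This is precisely where the bound $\|\bfk\|\le N+1$ in Lemma~\ref{lemma:gap} is needed, the index sums $\jvec\pm\bfk$ occurring here having norm at most $N+1$.)

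For a resonant index the corresponding line of (\ref{mod-eq}) is a differential equation in $\tau$: for $(0,\bfzero)$ it reads $\tfrac{d^2}{d\tau^2}z_0^\bfzero=\eps^{2\alpha}\bigl(-\nabla_0^\bfzero\calU(\bfz)+\delta_0^\bfzero\bigr)$, and for $\bfk$ representing $[\pm\jvec]$, division by the leading coefficient $\pm 2\iu\varpi_j\eps^{-\alpha}$ turns it into a first-order equation whose dominant right-hand side is $\mp\iu\eps^\alpha\vartheta_j z_j^\bfk$ --- an $O(1)$ coefficient since $|\vartheta_j|\le\|\bfvartheta\|\le\gamma\eps^{-\alpha}$ by (\ref{gamma}) --- while the $\eps^{-2\alpha}$-derivative term and the $\calU$-contribution become perturbations carrying strictly positive powers of $\eps$ (using $\varpi_j\ge\eps^{-1}$ and $|z_j^{\pm\jvec}|=O(\eps)$). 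These ODEs are supplemented with initial values chosen so that (\ref{mfe2}) matches $q_j(0)$ and $\dot q_j(0)$ exactly; at leading order this is the invertible assignment $z_0^\bfzero(0)=q_0(0)$, $\tfrac{d}{d\tau}z_0^\bfzero(0)=\eps^\alpha p_0(0)+\cdots$, $z_j^{\pm\jvec}(0)=\tfrac12\bigl(q_j(0)\mp\iu\varpi_j^{-1}p_j(0)\bigr)+\cdots$, with $O(\eps)$ corrections from the other modes. For a non-resonant index the line of (\ref{mod-eq}) is solved for $z_j^\bfk$ algebraically: collecting the terms linear in $z_j^\bfk$, its coefficient is $\varpi_j^2-2\varpi_j\vartheta_j+\vartheta_j^2-(\bfk\cdot\bfvarpi)^2=\omega_j^2-(\bfk\cdot\bfvarpi)^2$, which for $\eps$ small is again bounded below of the same order, so $z_j^\bfk$ equals $\bigl(\omega_j^2-(\bfk\cdot\bfvarpi)^2\bigr)^{-1}$ times $-\nabla_j^{-\bfk}\calU(\bfz)$ plus lower-order time-derivative and defect terms. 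Finally, with scaled unknowns $u_j^\bfk$ given by $z_0^\bfzero=u_0^\bfzero$, $z_j^{\pm\jvec}=\omega_j^{-1}u_j^{\pm\jvec}$, and $z_j^\bfk=|\varpi_j^2-(\bfk\cdot\bfvarpi)^2|^{-1}\eps^{\|\bfk\|}u_j^\bfk$ otherwise, the coupled system for $\bfu=(u_j^\bfk)$ becomes \emph{regular}: its coefficients are bounded uniformly in $\eps$ and the frequencies, and every coupling term carries a strictly positive power of $\eps$ --- because a product of modes $\bfk^1,\dots,\bfk^m$ feeding the $\bfk$-equation has $\|\bfk\|\le\|\bfk^1\|+\cdots+\|\bfk^m\|$ by minimality of the representative, so the surplus modes supply extra factors of $\eps$.

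I would then run a fixed number of reverse Picard iterations (the number depending only on $n$ and $N$) for this regular system in $C^\ell([0,1])$, with $\ell$ the prescribed derivative order plus a fixed margin: at each step integrate the resonant ODEs from the prescribed initial data and evaluate the non-resonant algebraic relations, feeding in the previous iterate. Since each resonant solution operator on the unit $\tau$-interval is bounded with $O(1)$ constants (Gronwall) and the algebraic relations divide only by the bounded-below factors, the bounds on $\bfu$ and all its $\tau$-derivatives propagate uniformly in $\eps$; and since every coupling carries a power of $\eps$, each step reduces the residual of the modulation equations by a positive power of $\eps$, so after finitely many steps it is $O(\eps^{N+1})$. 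The mutual dependence between the resonant modes' initial data and the non-resonant modes' values at $\tau=0$ is a small $O(\eps)$ correction absorbed into the same iteration. Legitimacy of the Taylor expansions of $U$ about $(z_0^\bfzero,0,\dots,0)$ requires $z_0^\bfzero(\tau)$, hence $\bfq(t)$, to stay in the $\rho$-neighbourhood of $K\times 0\times\dots\times 0$, which holds for $\eps$ small because $q_0(0)\in K$ and everything remains within $O(\eps^\alpha)$ of $(q_0(0),0,\dots,0)$ on $[0,\eps^\alpha]$. Reading $z_j^\bfk$ off the final iterate gives exactly the asserted bounds, and $z_j^{-\bfk}=\overline{z_j^\bfk}$ holds because the construction pairs $\pm\bfk$ and $\calN$ is symmetric. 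I expect the main obstacle to be precisely the verification that the rescaled system is regular --- that the large correction $\bfvartheta=O(\eps^{-\alpha})$ never produces an unbounded coefficient or a small divisor --- which is exactly where Lemma~\ref{lemma:gap} and (\ref{inM})--(\ref{notinM}) are indispensable; the iteration and the Gronwall estimates are routine.

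The last step is to bound the defect and the remainder. By construction the residual $\delta_j^\bfk$ of (\ref{mod-eq}) for $\bfk\in\calK$ collects only discarded contributions: the Taylor remainder of $U$ beyond order $N$, which is $O(\eps^{N+1})$ since $\bfq-(z_0^\bfzero,0,\dots,0)=O(\eps)$, and products of modulation functions whose combined index class has minimal $\calN$-representative of norm $>N$, each of size $O(\eps^{\|\bfk^1\|+\cdots+\|\bfk^m\|})=O(\eps^{N+1})$; hence $|\delta_j^\bfk|\le C_2\eps^{N+1}$. Writing $q^{\rm MFE}_j(t)=\sum_{\bfk\in\calK}z_j^\bfk(\eps^{-\alpha}t)\,\e^{\iu(\bfk\cdot\bfvarpi)t}$ and $r_j=q_j-q^{\rm MFE}_j$, the choice of initial data gives $r_j(0)=\dot r_j(0)=0$, and $r_j$ solves $\ddot r_j+\omega_j^2 r_j=\nabla_j U(\bfq)-\nabla_j U(q^{\rm MFE})+d_j$, where the oscillator defect $d_j$ --- the residual of (\ref{ode}) for $q^{\rm MFE}$, made up of the modes $\bfk\notin\calK$ and the same Taylor remainder --- is $O(\eps^{N+1})$ by the same counting. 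Variation of constants for the harmonic oscillator bounds $\bigl(\omega_j^2|r_j(t)|^2+|\dot r_j(t)|^2\bigr)^{1/2}$ by $\int_0^t\bigl(|\nabla_j U(\bfq)-\nabla_j U(q^{\rm MFE})|+|d_j|\bigr)\,ds$; since $\nabla_j U$ is Lipschitz with an $O(1)$ constant on the relevant neighbourhood and the integration runs only over an interval of length $\eps^\alpha$, Gronwall yields $\bigl(\omega_j^2|r_j(t)|^2+|\dot r_j(t)|^2\bigr)^{1/2}\le C_3\eps^{N+1}$ for $0\le t\le\eps^\alpha$. The constants $C_1,C_2,C_3$ and $\eps^*$ inherit their stated dependence on $n$, $N$, $E$ and bounds of derivatives of $U$ from the finitely many estimates involved.
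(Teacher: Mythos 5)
Your proposal takes essentially the same route as the paper's proof: the same resonant/non-resonant splitting of the divisors $\varpi_j^2-(\bfk\cdot\bfvarpi)^2$ based on Lemma~\ref{lemma:gap} and (\ref{inM})--(\ref{notinM}), an iterative construction that solves ODEs in $\tau$ for the modes $(0,\bfzero)$ and $(j,\pm\jvec)$ and algebraic relations for all other modes with initial data matched to $(\bfq(0),\dot\bfq(0))$, a rescaling of the unknowns making the system uniformly bounded, a fixed number of iterations (depending only on $n$ and $N$ through $\mu$) each gaining a positive power of $\eps$, and the same truncation-plus-Taylor-remainder defect and variation-of-constants/Gronwall argument for $r_j$ on $[0,\eps^\alpha]$. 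The only cosmetic deviations are that you absorb $2\varpi_j\vartheta_j-\vartheta_j^2$ into the coefficient (getting $\omega_j^2-(\bfk\cdot\bfvarpi)^2$) for non-resonant modes and credit part of the per-step smallness to the norm surplus $\|\bfk\|\le\|\bfk^1\|+\dots+\|\bfk^m\|$ in the nonlinearity, whereas the paper obtains the uniform gain $\eps^\mu$ (and hence the iteration count $M=\lceil(N+1)/\mu\rceil$, with the attendant loss of two $\tau$-derivatives per step) directly from the quotients of the derivative and $\vartheta$ terms by the divisor, which also covers the linear same-$\bfk$ couplings where no norm surplus is available.
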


This will be proved in Section~\ref{sec:proof-mfe}. The proof follows the lines of previous proofs for modulated Fourier expansions; compare for example 
\cite[Chap.\,XIII]{hairer06gni} and, in technically more complicated situations than here, \cite{cohen08lta,gauckler12mes,hairer12ote}.

\subsection{The almost-invariant}

We introduce the functions
$y_{j}^\bfk (t) = z_{j}^\bfk (\eps^{-\alpha}t)\, \e^{\iu (\bfk\cdot\bfvarpi) t}$,
which we collect in the vector
$\bfy = (y_{j}^\bfk )$ for $ j=0,\ldots, n$ and $\bfk\in \calK $.
Since $\bfk\cdot\bfvarpi=0$ for $\bfk\in\calM$, the modulation potential satisfies
$
\calU (\bfy ) = \calU (\bfz ) .
$
By (\ref{mod-eq}),  we have with rotated defects $d_j^\bfk(t)=\delta_{j}^\bfk(\eps^{-\alpha}t)\e^{\iu (\bfk\cdot\bfvarpi) t}$ that
\begin{equation}\label{y-ode}
\ddot y_{j}^\bfk + \omega_{j}^2 \, y_{j}^\bfk =
 - \nabla_{j}^{-\bfk}\, \calU (\bfy ) + d_{j}^\bfk ,
\end{equation}
where $ \nabla_{j}^{-\bfk}$ denotes the partial derivative with
respect to $y_{j}^{-\bfk}$. 

Without the defects $d_j^\bfk$, equation \eqref{y-ode} is again a Hamiltonian system. Since the sum in $\calU (\bfy )$ is over multi-indices $\bfk^1,\dots,\bfk^m$ with $\bfk^1+\dots+\bfk^m\in\calM$, the extended potential $\calU$ is invariant under the action of the one-parameter group $S(\theta)\bfy=(\e^{\iu\theta(\bfk\cdot\bfvarpi)}y_j^\bfk)$: 
$$
\calU(S(\theta)\bfy) = \calU(\bfy)\quad\hbox{for all $\theta\in \real$,}
$$
because $\bfk\cdot\bfvarpi=0$ for $\bfk\in\calM$. By Noether's theorem this leads to a conserved quantity $\calE$ of the system \eqref{y-ode} considered without defects. As we now show, this quantity is almost conserved in our situation including the defects $d_j^\bfk$: We have 
$$
-\sum_{j=0}^n\sum_{\bfk\in \calK} \iu (\bfk\cdot\bfvarpi)\, y_j^{-\bfk} \,\nabla_j^{-\bfk}\calU(\bfy) =
\frac d{d \theta}\,\calU(S(\theta)\bfy)\Big\vert_{\theta=0} = 0,
$$
and for
\begin{equation}\label{calE}
\calE (\bfy ,\dot \bfy ) = -\iu \sum_{j=0}^n\sum_{\bfk\in \calK}
  (\bfk\cdot\bfvarpi ) \, y_{j}^{-\bfk} \, \dot y_{j}^{\bfk} 
\end{equation}
we thus obtain by multiplying (\ref{y-ode}) with $-\iu(\bfk\cdot\bfvarpi) y_j^{-\bfk}$ and summing over $j$ and $\bfk$ that
\[
\frac d{d t}\calE (\bfy (t),\dot \bfy (t)) = -\iu \sum_{j=0}^n\sum_{\bfk\in \calK}
  (\bfk\cdot\bfvarpi ) \,y_{j}^{-\bfk}(t)\, d_{j}^\bfk(t)  .
\]
Since the right-hand side is $\bigo (\eps^{N+1} )$ by the estimates of Proposition~\ref{thm:mfe},  $\calE$ is an almost-invariant. Moreover, $\calE$ turns out to be close to the oscillatory energy $H_{\bfomega}$ along the solution of (\ref{ode}).

\begin{proposition} \label{thm:inv}
In the situation of Proposition~\ref{thm:mfe} we have for $0\le t \le \eps^\alpha$
\begin{eqnarray*}
&&
|\calE (\bfy (t),\dot \bfy (t)) - \calE (\bfy (0),\dot \bfy (0)) | \le C\eps^{N+1}
\\
&&
|\calE (\bfy (t),\dot \bfy (t)) - H_{\bfomega} (\bfp(t),\bfq(t))| \le C' \eps^{3/4}.
\end{eqnarray*}
The constants $C$ and $C'$ are independent of $\eps$ and the frequencies
$\omega_{j}\ge \eps^{-1}$.
\end{proposition}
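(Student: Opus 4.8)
The plan is to prove the two estimates separately. The first is a direct integration of the evolution equation for $\calE$ that was derived just above the statement; the second compares the leading parts of $\calE$ and $H_{\bfomega}$ along the modulated Fourier expansion of Proposition~\ref{thm:mfe}, and it is there that the exponent $\tfrac34$ is produced by the mismatch between $\omega_j^2$ and the modified $\varpi_j^2$.

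For the first estimate I would start from $\frac d{dt}\calE(\bfy(t),\dot\bfy(t))=-\iu\sum_{j}\sum_{\bfk\in\calK}(\bfk\cdot\bfvarpi)\,y_j^{-\bfk}(t)\,d_j^\bfk(t)$, in which $|d_j^\bfk(t)|=|\delta_j^\bfk(\eps^{-\alpha}t)|\le C_2\eps^{N+1}$ for $0\le t\le\eps^\alpha$ by Proposition~\ref{thm:mfe}, and bound $|(\bfk\cdot\bfvarpi)\,z_j^{-\bfk}(\tau)|$ for $0\le\tau\le1$ uniformly in $\eps$ and in the frequencies. Let $\bfk_j\in\calK$ be the (unit-vector) representative of the class $[\jvec]$; note $\jvec\notin\calM$, since otherwise $\varpi_j=\jvec\cdot\bfvarpi=0$ by \eqref{inM}, contradicting $\varpi_j\ge\tfrac12\eps^{-1}$ for small $\eps$. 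Then $\bfk_j\cdot\bfvarpi=\varpi_j$, and Proposition~\ref{thm:mfe} gives $|z_j^{\pm\bfk_j}|\le C_1\omega_j^{-1}$, so $|(\bfk_j\cdot\bfvarpi)\,z_j^{-\bfk_j}|\le C_1\varpi_j\omega_j^{-1}=C_1(1+\vartheta_j/\omega_j)=\bigo(1)$ by \eqref{gamma}. For every other $\bfk\in\calK$ the multi-indices $\jvec\pm\bfk$ have norm $\le N+1$ and lie outside $\calM$ (otherwise $\bfk=\pm\bfk_j$), so \eqref{notinM} bounds $|\varpi_j\mp\bfk\cdot\bfvarpi|$ from below; combined with the denominator bound $|z_j^\bfk|\le C_1|\varpi_j^2-(\bfk\cdot\bfvarpi)^2|^{-1}\eps^{\|\bfk\|}$ of Proposition~\ref{thm:mfe} and with $\|\bfk\|\ge1$ whenever $\bfk\ne\bfzero$ (while $\bfzero\cdot\bfvarpi=0$), this gives $|(\bfk\cdot\bfvarpi)\,z_j^{-\bfk}|=\bigo(\eps)$, again uniformly. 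Hence the right-hand side above is $\bigo(\eps^{N+1})$, and integrating over $0\le t\le\eps^\alpha$ yields $|\calE(\bfy(t),\dot\bfy(t))-\calE(\bfy(0),\dot\bfy(0))|\le C\eps^{N+1+\alpha}\le C\eps^{N+1}$.

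For the second estimate I would insert $q_j=\sum_{\bfk\in\calK}y_j^\bfk+r_j$ and $\dot q_j=\sum_{\bfk\in\calK}\dot y_j^\bfk+\dot r_j$ into $H_{\bfomega}(\bfp,\bfq)=\sum_{j=1}^n\tfrac12(|\dot q_j|^2+\omega_j^2|q_j|^2)$ and show that only the leading mode $\bfk=\pm\bfk_j$ survives. The same combination of \eqref{notinM} (for $\jvec\pm\bfk$) and the bounds of Proposition~\ref{thm:mfe}, again crucially using $\|\bfk\|\ge1$ for $\bfk\ne\bfzero$ (the case $\bfk=\bfzero$ handled by $|z_j^\bfzero|\le C_1\varpi_j^{-2}=\bigo(\eps^2)$), gives $\omega_j|z_j^\bfk|+|(\bfk\cdot\bfvarpi)z_j^\bfk|+\eps^{-\alpha}|\dot z_j^\bfk|=\bigo(\eps)$ for all $\bfk\ne\pm\bfk_j$, uniformly in $\omega_j\ge\eps^{-1}$; since also $|\dot q_j|=\bigo(1)$ and $\omega_j|q_j|=\bigo(1)$, the non-leading modes contribute only $\bigo(\eps)$ and the remainder $r_j$ only $\bigo(\eps^{N+1})$ to $H_{\bfomega}$, and likewise to $\calE$ (whose $j=0$ part is entirely non-leading, hence $\bigo(\eps)$, as $z_0^\bfzero$ does not enter \eqref{calE}). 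Writing $w_j(t)=z_j^{\bfk_j}(\eps^{-\alpha}t)\,\e^{\iu\varpi_j t}$, the leading parts of $q_j$ and $\dot q_j$ are $w_j+\overline{w_j}$ and $\iu\varpi_j(w_j-\overline{w_j})$, the latter up to the correction $\eps^{-\alpha}\dot z_j^{\bfk_j}\e^{\iu\varpi_j t}=\bigo(\eps^{1-\alpha})$, and an elementary computation gives
\[
\tfrac12\bigl(|\dot q_j|^2+\omega_j^2|q_j|^2\bigr)=2\varpi_j^2|w_j|^2+(\omega_j^2-\varpi_j^2)\bigl(|w_j|^2+\mathrm{Re}\,(w_j\cdot w_j)\bigr)+\bigo(\eps^{1-\alpha}),
\]
whereas the $\pm\bfk_j$ contribution to $\calE$ equals $2\varpi_j\,\mathrm{Im}\,(\overline{y_j^{\bfk_j}}\dot y_j^{\bfk_j})=2\varpi_j^2|w_j|^2+\bigo(\eps^{1-\alpha})$. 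Subtracting and summing over $j=1,\dots,n$ therefore yields
\[
\calE(\bfy(t),\dot\bfy(t))-H_{\bfomega}(\bfp(t),\bfq(t))=\sum_{j=1}^n(\varpi_j^2-\omega_j^2)\bigl(|w_j|^2+\mathrm{Re}\,(w_j\cdot w_j)\bigr)+\bigo(\eps^{1-\alpha}).
\]

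The conclusion, and the exponent $\tfrac34$, then come from the mismatch $\varpi_j^2-\omega_j^2=2\omega_j\vartheta_j+\vartheta_j^2$: with $|w_j|\le C_1\omega_j^{-1}$, $|\vartheta_j|\le\gamma\eps^{-\alpha}$ from \eqref{gamma}, and $\omega_j\ge\eps^{-1}$, one has $|(\varpi_j^2-\omega_j^2)(|w_j|^2+\mathrm{Re}\,(w_j\cdot w_j))|\le2C_1^2\,|\varpi_j^2-\omega_j^2|\,\omega_j^{-2}\le4C_1^2\gamma\,\eps^{1-\alpha}+2C_1^2\gamma^2\,\eps^{2-2\alpha}=\bigo(\eps^{1-\alpha})$, and since $\alpha\le\tfrac14$ by Lemma~\ref{lemma:gap} this is $\bigo(\eps^{3/4})$, as claimed; locating the gap in $[\mu',\delta]$ rather than $[\mu,\tfrac14]$, so that $\alpha\le\delta$, the same argument gives the bound $\eps^{1-\delta}$ of the remark following Theorem~\ref{thm:main}. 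I expect the main obstacle to be the uniform bookkeeping in the step just described — verifying that every non-leading and every genuinely oscillatory contribution to $H_{\bfomega}$ and to $\calE$ is $\bigo(\eps)$ uniformly in the unbounded frequencies $\omega_j\ge\eps^{-1}$ — for which the gap estimate \eqref{notinM} on the modified frequencies, applied to the multi-indices $\jvec\pm\bfk$, is exactly what is needed.
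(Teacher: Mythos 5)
Your proof is correct and follows essentially the same route as the paper's: integrate the identity for $\frac{d}{dt}\calE$ using the defect and coefficient bounds of Proposition~\ref{thm:mfe} to get the first estimate, and for the second compare both $\calE$ and $H_{\bfomega}$ with $2\sum_j\varpi_j^2|y_j^{\jvec}|^2$ up to $\bigo(\eps^{1-\alpha})$, the frequency mismatch $\varpi_j^2-\omega_j^2=2\omega_j\vartheta_j+\vartheta_j^2$ together with $\alpha\le\tfrac14$ producing the exponent $3/4$. Your additional bookkeeping (the representative $\bfk_j$ of the class $[\jvec]$, applying \eqref{notinM} to $\jvec\pm\bfk$ to control the non-leading modes) only makes explicit what the paper subsumes under ``the bounds of Proposition~\ref{thm:mfe}''.
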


\begin{proof}
The first inequality has already been derived above. Concerning the second inequality, we note that with $\tau=\eps^{-\alpha} t$ we have for $\bfk\ne\bfzero$
$$
\dot y_j^\bfk(t) = 
\e^{\iu(\bfk\cdot\bfvarpi)t}\Bigl( \eps^{-\alpha}\frac{d z_j^\bfk}{d\tau}(\tau) +
\iu(\bfk\cdot\bfvarpi)z_j^\bfk(\tau) \Bigr) = \iu(\bfk\cdot\bfvarpi)y_j^\bfk( t) +
\bigo(\eps^{1-\alpha}).
$$ 
With the bounds of Proposition~\ref{thm:mfe} it follows that (omitting the argument $t$)
\begin{eqnarray*}
\calE (\bfy,\dot \bfy) &=& 2\sum_{j=1}^n \varpi_j^2 |y_j^\jvec|^2 + \bigo(\eps^{1-\alpha})
\\
&=& 2\sum_{j=1}^n \omega_j^2 |y_j^\jvec|^2 + \bigo(\eps^{1-\alpha}).
\end{eqnarray*}
On the other hand, with Proposition~\ref{thm:mfe} we also obtain 
\begin{eqnarray*}
H_{\bfomega} (\bfp,\bfq) &=& \tfrac12 \sum_{j=1}^n \Bigl(
|\dot y_j^\jvec +\dot y_j^{-\jvec}|^2 + \omega_j^2 |y_j^\jvec +y_j^{-\jvec}|^2 \Bigr)
+ \bigo(\eps)
\\
&=& 2 \sum_{j=1}^n \omega_j^2 |y_j^\jvec|^2 + \bigo(\eps^{1-\alpha}).
\end{eqnarray*}
Since $\alpha\le\tfrac14$, the result follows. \qed
\end{proof}

\subsection{Transition from one interval to the next}
Propositions~\ref{thm:mfe} and \ref{thm:inv} are only valid on a short time interval $0\le t\le \eps^\alpha$. We now consider a second short time interval $\eps^\alpha\le t\le 2\eps^\alpha$ and control in the following proposition the transition in the almost-invariant from the first to the second interval. 

\begin{proposition}
\label{thm:trans} Let the conditions of Proposition~\ref{thm:mfe} be fulfilled.
Let $z_j^\bfk(\eps^{-\alpha}t)$ for $0\le t \le \eps^\alpha$ be the coefficient functions as in Proposition~\ref{thm:mfe}
for initial data $(\bfp(0),\bfq(0))$,  and let $y_j^\bfk(t)=z_j^\bfk(\eps^{-\alpha}t)\e^{\iu(\bfk\cdot\bfvarpi)t}$ and
$\bfy(t)=(y^{\bfk}_j(t))$.
Let further $\widetilde\bfy(t)=({\widetilde y}^{\bfk}_j(t))$ be the corresponding functions of the modulated Fourier expansion for $0\le t \le \eps^\alpha$  to the initial data $(\bfp(\eps^\alpha),\bfq(\eps^\alpha))$, constructed as in Proposition~\ref{thm:mfe}.
Then,
$$
 | \calE(\bfy(\eps^\alpha),\dot\bfy(\eps^\alpha))-\calE(\widetilde{\bfy}(0),\dot{\widetilde\bfy}(0))|  \le C \eps^{N+1},
 $$
where $C$ is independent of $\eps$ and the frequencies
$\omega_{j}\ge \eps^{-1}$.
\end{proposition}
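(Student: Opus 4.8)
The plan is to reduce the assertion to the matching of the two expansions' modulation data at the junction, i.e.\ to $y_j^\bfk(\eps^\alpha)=\widetilde y_j^\bfk(0)+\bigo(\eps^{N+1})$ and $\dot y_j^\bfk(\eps^\alpha)=\dot{\widetilde y}_j^\bfk(0)+\bigo(\eps^{N+1})$ componentwise, with the position-type quantities carrying the weight $\omega_j$ exactly as in the remainder bound of Proposition~\ref{thm:mfe}. Granting this, the estimate for $\calE$ is a direct computation from (\ref{calE}): the difference $\calE(\bfy(\eps^\alpha),\dot\bfy(\eps^\alpha))-\calE(\widetilde\bfy(0),\dot{\widetilde\bfy}(0))$ splits into terms $(\bfk\cdot\bfvarpi)(y_j^{-\bfk}(\eps^\alpha)-\widetilde y_j^{-\bfk}(0))\dot y_j^\bfk(\eps^\alpha)$ and $(\bfk\cdot\bfvarpi)\widetilde y_j^{-\bfk}(0)(\dot y_j^\bfk(\eps^\alpha)-\dot{\widetilde y}_j^\bfk(0))$, in each of which one factor is $\bigo(\eps^{N+1})$ while the sum over $(j,\bfk)$ of the complementary factors is $\bigo(1)$ by Proposition~\ref{thm:mfe}; the only delicate point is the factor $|\bfk\cdot\bfvarpi|$, which for the dominant indices $\bfk=\pm\jvec$ is $\bigo(\eps^{-1})$ but is compensated either by $|\widetilde y_j^{\pm\jvec}(0)|\le C_1\omega_j^{-1}$ or by the weight $\omega_j$ already attached to the difference $y_j^{\pm\jvec}(\eps^\alpha)-\widetilde y_j^{\pm\jvec}(0)$.

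To establish the matching I would use that both $\bfy$ and $\widetilde\bfy$ are, by the construction underlying Proposition~\ref{thm:mfe}, modulated Fourier expansions fitted to one and the same exact solution around $t=\eps^\alpha$ (the first expansion being continued slightly beyond $\eps^\alpha$, which the construction permits): $\bfy$ and its derivative reproduce $(\bfq(\eps^\alpha),\bfp(\eps^\alpha))$ at $t=\eps^\alpha$ up to the remainder, which is $\bigo(\eps^{N+1})$ in the weighted norm, while $\widetilde\bfy$ reproduces the same data at its time $0$; hence $\sum_{\bfk\in\calK}(y_j^\bfk(\eps^\alpha)-\widetilde y_j^\bfk(0))=\bigo(\eps^{N+1})$ and likewise for the derivatives. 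To pass from these summed relations to the componentwise ones I would invoke the (approximate) uniqueness of the modulated Fourier expansion: the only modulation functions carrying free initial data are $z_0^\bfzero$ (governed by the second-order part of (\ref{mod-eq}) with $j=0,\bfk=\bfzero$) and $z_j^{\pm\jvec}$, $j=1,\dots,n$ (with $z_j^{-\jvec}=\overline{z_j^\jvec}$, governed to leading order by the first-order part of (\ref{mod-eq}), the algebraic coefficient $\varpi_j^2-(\bfk\cdot\bfvarpi)^2$ vanishing precisely for $\bfk=\pm\jvec$), whereas every remaining $z_j^\bfk$ is algebraically slaved through (\ref{mod-eq}), the coefficient $\varpi_j^2-(\bfk\cdot\bfvarpi)^2$ being bounded below in modulus uniformly in the frequencies by (\ref{notinM}). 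The free data in turn are recovered from $(\bfq(\eps^\alpha),\bfp(\eps^\alpha))$ by a near-identity map ($z_j^\jvec=\tfrac12 q_j-\tfrac{\iu}{2\varpi_j}\dot q_j$ plus corrections depending only on the slaved functions and on the $\bigo(\eps^{N+1})$ defects), and the phase $\e^{\iu(\bfk\cdot\bfvarpi)\eps^\alpha}$ relating the two expansions appears automatically since $y_j^\bfk(\eps^\alpha)=z_j^\bfk(1)\e^{\iu(\bfk\cdot\bfvarpi)\eps^\alpha}$. Rerunning the contraction argument of Proposition~\ref{thm:mfe} for the difference of the two sets of modulation functions --- which closes exactly as before, since the frequencies $0,\pm\varpi_1,\dots,\pm\varpi_n$ are $\tfrac12\eps^{-\alpha-\mu}$-separated from each other and from every other $\bfk\cdot\bfvarpi$, $\bfk\in\calK$, the relevant index differences $\jvec\pm\bfk$ having norm $\le N+1$ and not lying in $\calM$ --- then yields the required componentwise $\bigo(\eps^{N+1})$ bounds.

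The main obstacle is precisely this approximate uniqueness of the modulated Fourier expansion attached to a given solution. It cannot be circumvented by working with the physical state alone: by Proposition~\ref{thm:inv} the quantity $\calE$ is only $\bigo(\eps^{3/4})$-close to $H_\bfomega(\bfp,\bfq)$, so it is the finer modulation data, not just $(\bfq(\eps^\alpha),\bfp(\eps^\alpha))$, that has to be matched. Making the uniqueness precise requires treating the differential modes $z_0^\bfzero,z_j^{\pm\jvec}$ and the algebraically slaved modes separately and exploiting the frequency separation in both; once this is in place, the remaining bookkeeping --- in particular keeping track of the extra power of $\eps$ that the position-type components gain, so as to absorb the factor $\bfk\cdot\bfvarpi=\bigo(\eps^{-1})$ in (\ref{calE}) --- runs parallel to the proof of Proposition~\ref{thm:mfe} and is routine.
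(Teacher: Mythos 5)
Your proposal is correct and follows essentially the same route as the paper: the paper likewise continues the first set of modulation functions beyond the junction and reruns the iteration/defect (contraction) estimates for the difference $\bfw^m(\tau)=\bfLambda\bfz^M(1+\tau)-\bfLambda\widetilde{\bfz}^m(\tau)$, with the matching of the free data ($z_0^\bfzero$ and the diagonal modes $z_j^{\pm\jvec}$) at the junction obtained by inserting the remainder bound of \eqref{mfe2} into the initial-value relations \eqref{mod-init}, the other modes being algebraically slaved. The weighted quantities $\bfLambda\bfz$ then absorb the factors $\bfk\cdot\bfvarpi$ in $\calE$ exactly as in your bookkeeping, so no further ingredient is needed.
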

 
 The proof will be given in Section~\ref{sec:proof-mfe}.
 
\subsection{From short to long time intervals}

Now we put many short time intervals together to get the long-time result of Theorem~\ref{thm:main}. For $m=0,1,2,\dots$, let $\bfy_m(t)$ contain the summands of the modulated Fourier expansion starting from $(\bfp(m\eps^\alpha),\bfq(m\eps^\alpha))$. As long as $H_{\bfomega}(\bfp(m\eps^\alpha),\bfq(m\eps^\alpha))\le 2E$, Proposition~\ref{thm:inv} yields for $0\le t \le \eps^\alpha$
$$
|\calE (\bfy_m (t),\dot \bfy_m (t)) - \calE (\bfy_m (0),\dot \bfy_m (0)) | \le C\eps^{N+1}.
$$
By Proposition~\ref{thm:trans},
$$
 | \calE(\bfy_m(\eps^\alpha),\dot\bfy_m(\eps^\alpha))-\calE({\bfy}_{m+1}(0),\dot\bfy_{m+1}(0))|  \le C \eps^{N+1}.
 $$
 Summing up these estimates over $m$ and applying the triangle inequality yields, for $0\le \theta \le \eps^\alpha$,
$$
|\calE(\bfy_{m}(\theta),\dot\bfy_{m}(\theta)) -  \calE (\bfy_0 (0),\dot \bfy_0 (0)) | \le 2(m+1)C\eps^{N+1}.
$$
By Proposition~\ref{thm:inv}, we have at $t=m\eps^\alpha+\theta$
$$
|\calE (\bfy_m (\theta),\dot \bfy (\theta)) - H_{\bfomega} (\bfp(t),\bfq(t))| \le C' \eps^{3/4}.
$$
Combining these bounds at $t$ and at $0$ we obtain for $t\le\eps^{-N}$
$$
|H_{\bfomega} (\bfp(t),\bfq(t)) - H_{\bfomega} (\bfp(0),\bfq(0))| \le 2C t \eps^{N+1-\alpha} + 2C'\eps^{3/4}.
$$
This yields the bound of Theorem~\ref{thm:main}.

\section{Proof of Propositions~\ref{thm:mfe} and \ref{thm:trans}}
\label{sec:proof-mfe}

\subsection{Construction of the modulation functions}

We give an iterative construction of the functions $z^\bfk_j$ ($j=0,\dots,n$ and $\bfk\in\calK$)
such that after $M=\lceil (N+1)/\mu \rceil $ iteration steps, the defect in equations
(\ref{mod-eq}) 
is of size
$\bigo(\eps^{N+1})$. 
We denote by $\bfz^m=\bigl([z_j^\bfk]^m\bigr)$ the $m$th
iterate and 
distinguish between the following cases:
\begin{enumerate}
\item
For $j=0$ and $\bfk = \bfzero$ the first two terms in (\ref{mod-eq}) disappear and we iterate with a second order differential equation
for $[z_{0}^\bfzero]^{m+1}$:
$$
\eps^{-2\alpha} \frac{d^2[z_{0}^\bfzero]^{m+1}}{d\tau^2} 
 = 
- \nabla_{0}^{-\bfzero}\, \calU (\bfz^m ).
$$
\item
For $j\ne 0$ and $\bfk = \pm\jvec$ with
the $j$th unit vector $\jvec = (0,\ldots ,0,1,0,\ldots ,0)$, the first term in (\ref{mod-eq}) disappears and
we iterate using a first order differential equation
for~$[z_{j}^{\pm\jvec}]^{m+1}$:
\begin{eqnarray}
&&\hspace{-2mm}
  \pm 2\iu\, \varpi_j\, \eps^{-\alpha}\frac{d[z_{j}^{\pm\jvec}]^{m+1}}{d\tau} 
+ \eps^{-2\alpha} \frac{d^2[z_{j}^{\pm\jvec}]^m}{d\tau^2} 
\label{mod-it-jj} 
\\
&& \hspace{4cm}
 = 
 (2\varpi_j\vartheta_j - \vartheta_j^2)[z_j^{\pm\jvec}]^{m+1} - \nabla_{j}^{\mp\jvec}\, \calU (\bfz^m ).
\nonumber
\end{eqnarray}
We note that by (\ref{gamma}) we have for sufficiently small $\eps$
$$
\Bigl|\frac{2\varpi_j\vartheta_j - \vartheta_j^2}{2\varpi_{j} \eps^{-\alpha}}\Bigr| \le 2\gamma .
$$

\item
In all other cases we iterate with an explicit equation for~$[z_j^\bfk]^{m+1}$:
\begin{eqnarray}
&&\hspace{-2mm}
 \bigl(\varpi_{j}^2 - (\bfk\cdot\bfvarpi )^2\bigr)\, [z_{j}^\bfk]^{m+1}
 + 2\,\iu (\bfk\cdot\bfvarpi )\, \eps^{-\alpha}\frac{d[z_{j}^\bfk]^m}{d\tau} 
+ \eps^{-2\alpha} \frac{d^2[z_{j}^\bfk]^m}{d\tau^2} 
\label{mod-it-jk} 
\\
&& \hspace{4cm}
 = 
 (2\varpi_j\vartheta_j - \vartheta_j^2)[z_j^\bfk]^m - \nabla_{j}^{-\bfk}\, \calU (\bfz^m ).
\nonumber
\end{eqnarray}

 For $j\ne 0$ we note that
$\pm\jvec-\bfk\notin \calM$ for $\bfk\in\calK$ and $\bfk\ne\pm\jvec$, and therefore
we have by (\ref{notinM}) 
$$
\Bigl| \frac{(\bfk\cdot\bfvarpi)  \eps^{-\alpha}}{\varpi_{j}^2 - (\bfk\cdot\bfvarpi )^2}
\Bigr|
\le 2 \eps^{\mu}, \qquad
\Bigl| \frac{\eps^{-2\alpha}}{\varpi_{j}^2 - (\bfk\cdot\bfvarpi )^2}\Bigr| \le 2
\eps^{1-\alpha+\mu}
$$
and
$$
\Bigl| \frac{2\varpi_j\vartheta_j - \vartheta_j^2}{\varpi_{j}^2 - (\bfk\cdot\bfvarpi )^2} 
\Bigr|
\le 8\gamma \eps^\mu.
$$
For $j=0$ and $\bfk\notin\calM$ we have  similar estimates directly by (\ref{notinM}).
\end{enumerate}
We need initial values
$[z_0^\bfzero]^{m+1} (0), \frac{d}{d\tau} [z_0^\bfzero]^{m+1} (0)$
and $[z_j^{\pm \jvec}]^{m+1}(0) $ for $j\ne 0$. They are uniquely determined by the equations
(for $j=0,\dots,n$)
\begin{equation}\label{mod-init}
q_{j}(0) =  \sum_{\bfk\in\calK}  [z_{j}^\bfk]^{m+1} (0),\quad
\dot q_{j}(0) = \sum_{\bfk\in\calK} 
\Bigl( \eps^{-\alpha} \frac{d[z_{j}^\bfk]^{m+1}}{d\tau} (0)+ \iu (\bfk\cdot\bfvarpi ) \,
[z_{j}^\bfk]^{m+1} (0) \Bigr)
\end{equation}
after inserting \eqref{mod-it-jk} for $\bfk\ne\pm\jvec$ and replacing for $j\ne 0$ the derivative of $[z^{\pm\jvec}_j]^{m+1}$ with \eqref{mod-it-jj}.
The starting iterates are chosen as 
$[z_0^\bfzero]^0(\tau) $ equal to the solution at $t=\eps^\alpha \tau$ of $\ddot y = -\nabla_0 U(y,0,\dots,0)$ with $y(0)=q_0(0)$ and $\dot y(0)=\dot q_0(0)$,
and for all other $(j,\bfk)$ with $\bfk\ne\pm\jvec$, we set $[z_j^\bfk]^0(\tau)=0$. The  diagonal functions $[z_j^{\pm\jvec}]^0$ are then chosen as constant functions such that 
(\ref{mod-init}) is satisfied.

\subsection{Bounds of the modulation functions}

We estimate the modulation functions on intervals of length $\tau=\bigo(1)$, or equivalently $t=\bigo (\eps^\alpha)$.
With the above inequalities and the energy bound (\ref{bound-energy}) we obtain by induction on $m$, for $m=0,\dots,M$,
\[
[z_0^\bfzero]^m=\bigo(1), \quad [z_{j}^{\pm\jvec}]^m = \bigo (\omega_j^{-1} )
\]
and for all other $(j,\bfk)$,
$$
[z_j^\bfk]^m= \bigo\bigl( |\varpi_j^2-(\bfk\cdot\bfvarpi)^2|^{-1} \eps^{\|\bfk\|}  \bigr),
$$
and the same bounds hold for all their derivatives up to any prescribed order.

\subsection{Bounds of the defect}
We prove the estimate \eqref{eq:defect} of the defect $\delta_j^\bfk$ in Proposition~\ref{thm:mfe}. The defect $[\delta_j^\bfk]^m(\tau)$ in inserting the $m$th iterate $\bfz^m =([z_j^\bfk]^m)$ in the modulation equations (\ref{mod-eq}) satisfies
$$
[\delta_0^\bfzero]^m= -\eps^{-2\alpha}\biggl(  \frac{d^2[z_{0}^\bfzero]^{m+1}}{d\tau^2} - \frac{d^2[z_{0}^\bfzero]^{m}}{d\tau^2} \biggr)
$$
and for $j=1,\dots,n$
\begin{eqnarray*}
[\delta_j^{\pm\jvec}]^m &=& \mp2\iu \,\varpi_j\, \eps^{-\alpha}\biggl(
\frac{d[z_{j}^{\pm\jvec}]^{m+1}}{d\tau} - \frac{d[z_{j}^{\pm\jvec}]^{m}}{d\tau}\biggr)
\\
&&
+\ (2\varpi_j\vartheta_j-\vartheta_j^2)
\Bigl(
[z_{j}^{\pm\jvec}]^{m+1} - [z_{j}^{\pm\jvec}]^{m}\Bigr)
\end{eqnarray*}
and for all other $(j,\bfk)$
$$
[\delta_j^\bfk]^m =  -\bigl(\varpi_{j}^2 - (\bfk\cdot\bfvarpi )^2\bigr)\, 
\Bigl( [z_{j}^\bfk]^{m+1} - [z_{j}^\bfk]^{m} \Bigr).
$$
In the following we work with the $C^r([0,1])$ norm, defined for $\bfv=(v_j^\bfk)$ by
$$
\| \bfv \|_{C^r} = \max_{0\le \tau \le 1} \max_{0\le l \le r} \,\sum_{j=0}^n
\sum_{\bfk\in \calK} \Bigl| \frac{d^l}{d\tau^l} v_j^\bfk(\tau) \Bigr|,
$$
and use the notation
$$
\bfLambda\bfz = \bfv = (v_j^\bfk) \quad\hbox{with} \quad \left\{
\begin{array}{l}
v_0^\bfzero = \eps^{-2\alpha} z_0^\bfzero
\\[1mm]
v_j^{\pm\jvec} = 2\iu\,\varpi_j \eps^{-\alpha}  z_j^{\pm\jvec}, \quad\  j=1,\dots,n
\\[1mm]
v_j^\bfk = (\varpi_j^2-(\bfk\cdot\bfvarpi)^2) z_j^\bfk \quad\ \hbox{else.}
\end{array}
\right.
$$
With this notation and the above formulas for the defect we have 
\[
\bigl|[\delta_j^\bfk]^m\bigr| \le C \| \bfv^{m+1} - \bfv^m \|_{C^2}.
\]
We therefore study $\bfv^{m+1} - \bfv^m$ and show by induction on $m$ that 
for $m\le M$ and $r=2M-2m+2$,
\begin{equation}\label{eq:induction}
\| \bfv^{m+1} - \bfv^m \|_{C^r} = \bigo(\eps^{m\mu}),
\end{equation}
which implies the estimate \eqref{eq:defect} of the defect in Proposition~\ref{thm:mfe} if we use $M\ge (N+1)/\mu$ iterations for the construction of the modulation functions. Estimate \eqref{eq:induction} is shown by analysing the iterative construction of modulation functions, where the $(m+1)$th iterates are the dominant terms and the $m$th iterates the non-dominant ones. 

We first consider the diagonal elements $[v_j^{\pm\jvec}]^m$. From (\ref{mod-init}) we obtain after inserting (\ref{mod-it-jk}) for $\bfk\ne\pm\jvec$ that
$$
| [v_j^{\pm\jvec}]^{m+1}(0) - [v_j^{\pm\jvec}]^{m}(0) | \le C\eps^\mu \| \bfv^m - \bfv^{m-1} \|_{C^2}.
$$
Solving the differential equation (\ref{mod-it-jj}) by the variation-of-constants formula
then yields that
$$
| [v_j^{\pm\jvec}]^{m+1}(\tau) - [v_j^{\pm\jvec}]^{m}(\tau) | \le C\eps^\mu \| \bfv^m - \bfv^{m-1} \|_{C^2}, \quad 0\le\tau\le1 .
$$
Repeated differentiation in (\ref{mod-it-jj}) further shows that for $\ell\le 2M-2m+2$ 
and for $0\le\tau\le1$ 
$$
\Bigl| \frac{d^\ell}{d\tau^\ell}[ v_j^{\pm\jvec}]^{m+1}(\tau) - \frac{d^\ell}{d\tau^\ell}[ v_j^{\pm\jvec}]^{m}(\tau) \Bigr| \le C\eps^\mu \| \bfv^m - \bfv^{m-1} \|_{C^{\ell+2}}.
$$
Using $2\alpha\ge \mu$ we also bound, for $r=2M-2m+2$,
$$
\| [v_0^\bfzero]^{m+1} - [v_0^\bfzero]^{m} \|_{C^r} \le C\eps^\mu \| \bfv^m - \bfv^{m-1} \|_{C^{r+2}}.
$$
Similarly we have from the iteration formulas and the above inequalities that
$$
\| \bfv^{m+1} - \bfv^m \|_{C^r} \le C\eps^\mu \| \bfv^{m} - \bfv^{m-1} \|_{C^{r+2}} .
$$
This proves \eqref{eq:induction} and the estimate of the defect of Proposition~\ref{thm:mfe}. 

\subsection{Solution approximation}
We consider the $M$th iterates of the modulation functions (with $M= \lceil (N+1)/\mu \rceil$) and omit the superscript $M$  on the modulation functions and their defects. 
The truncated modulated Fourier expansion
$$
\widetilde q_{j}(t)= \sum_{\bfk\in\calK} z_{j}^\bfk (\eps^{-\alpha}t)\, \e^{\iu (\bfk\cdot\bfvarpi )t}
$$
satisfies the perturbed second order differential equation
$$
\ddot {\widetilde q_j} + \omega_j^2 \widetilde q_j = - \nabla_j U (\widetilde \bfq) + d_j,
\quad j=0,\dots,n,
$$
with
$$
d_j(t) = \sum_{\bfk\in \calK} \delta_j^\bfk(\eps^{-\alpha}t) \e^{\iu(\bfk\cdot\bfvarpi)t} - \sum_{\bfk\in\calN\setminus\calK} \nabla_j^{-\bfk}\calU(\bfz(\eps^{-\alpha}t))\e^{\iu(\bfk\cdot\bfvarpi)t} + \rho_j(t)
= \bigo(\eps^{N+1}),
$$

\noindent
where  $\rho(t)$
denotes the gradient of  the remainder term in the truncated Taylor expansion of the potential $U$ about $\bfz_0^\bfzero(\eps^{-\alpha}t)$: omitting the arguments $t$ and $\tau=\eps^{-\alpha}t$, and writing $\widetilde\bfq= \bfz^\bfzero_0 + \widehat\bfq$,
\[
\rho_j= \nabla_j U(\widetilde\bfq) - \nabla_j U(\bfz^\bfzero_0 ) 
- \sum_{m=1}^N
\sum_{j_{1},\ldots ,j_{m}=0}^n
 \frac 1 {m!}\, \partial_{j_1}\ldots\partial_{j_m}\nabla_j U(\bfz^\bfzero_0 )
\bigl( \widehat q_{j_{1}},\ldots ,\widehat q_{j_{m}} \bigr).
\]
After subtracting the differential equation (\ref{ode}), standard estimates using the variation of constants formula and the Gronwall inequality then yield the bound of $r_j(t)=\widetilde q_j(t)-q_j(t)$ for $0\le t \le \eps^\alpha$ as stated in Proposition~\ref{thm:mfe}.

\subsection{Proof of Proposition~\ref{thm:trans}} 
Let $\bfz^m = ([z^\bfk_j]^m)$ and $\widetilde{\bfz}^m = ([\widetilde{z}^\bfk_j]^m)$ be the $m$th iterates of the construction of the modulation functions $\bfz$ and $\widetilde{\bfz}$ for initial data $(\bfp(0),\bfq(0))$ and $(\bfp(\eps^\alpha),\bfq(\eps^\alpha))$, respectively. With the iteration for $\widetilde{\bfz}^m$ and the defect formula for $\bfz^M$ the same arguments as in the estimates of the defect yield for $\bfw^m(\tau) = \bfLambda\bfz^M(1+\tau) - \bfLambda\widetilde{\bfz}^m(\tau)$ that 
\begin{equation}\label{interface}
\| \bfw^{M} \|_{C^2} = \bigo(\eps^{N+1}).
\end{equation}
The only difference is that the estimate of $[w_j^{\pm\jvec}]^{m+1}(0)$ now reads
\[
|[w_j^{\pm\jvec}]^{m+1}(0)| \le C \eps^\mu \| \bfw^{m} \|_{C^2} + C\eps^{N+1},
\]
a bound that is obtained by inserting \eqref{mfe2} in the defining relation \eqref{mod-init} for $[\widetilde{z}_j^{\pm\jvec}]^{m+1}(0)$ and by using the estimate of the remainder term from Proposition~\ref{thm:mfe}. Similarly the estimate of $w_0^\bfzero$ has to be modified. The bound of Proposition~\ref{thm:trans} follows from \eqref{interface}.

\begin{acknowledgement} 
This work has been supported by Fonds National Suisse, Project No. 200020-126638.
\end{acknowledgement}

\bibliographystyle{amsplain}
\providecommand{\bysame}{\leavevmode\hbox to3em{\hrulefill}\thinspace}
\providecommand{\MR}{\relax\ifhmode\unskip\space\fi MR }
\providecommand{\MRhref}[2]{%
  \href{http://www.ams.org/mathscinet-getitem?mr=#1}{#2}
}
\providecommand{\href}[2]{#2}

\end{document}